\documentclass[reqno,a4paper,11pt]{amsart}

\usepackage{amsmath,amstext,amsfonts,amsbsy,eucal,amssymb}
\usepackage[latin1]{inputenc}

\numberwithin{equation}{section}

\newtheorem{theorem}{Theorem}[section]
\newtheorem{lemma}[theorem]{Lemma}

\newtheorem{conjecture}[theorem]{Conjecture}

\theoremstyle{definition}
\newtheorem{definition}[theorem]{Definition}

\begin{document}

\parskip 4pt
\baselineskip 16pt


\title[On a certain arithmetic function defined via Bernoulli numbers]
{On a certain arithmetic function defined via Bernoulli numbers}

\author[Andrei K. Svinin]{Andrei K. Svinin}
\address{Andrei K. Svinin, 
Matrosov Institute for System Dynamics and Control Theory of 
Siberian Branch of Russian Academy of Sciences,
P.O. Box 292, 664033 Irkutsk, Russia}
\email{svinin@icc.ru}

\thanks{This work was carried out within the framework of  the state assignment of the Ministry of Education
and Science of the Russian Federation on the project No.  126021217175-3.
}

\date{\today}

\keywords{Bernoulli number, Carmichael number, Giuga number}


\begin{abstract}
We investigate the integrality property of an arithmetic function defined on the set of odd numbers $n\geq 3$. It is constructed via Bernoulli numbers. As a result, we show that this function unifies three distinct number classes -- primes, Carmichael numbers, and Giuga numbers -- into a single integrality criterion. The article is elementary and therefore accessible to a broad audience.
\end{abstract}

\maketitle

\section*{Introduction}

The Bernoulli numbers $\left\{B_j\right\}$  are defined by the generating power series
\[
\frac{t}{e^t-1}=\sum_{j\geq 0} B_j\frac{t^j}{j !}.
\]

Next, we will use two theorems. The first is the classical von Staudt-Clausen theorem \cite{Clausen}, \cite{Hardy}, \cite{Staudt}.
\begin{theorem}
For any positive odd integer $n$:
\[
B_{n-1}+\sum_{p}\frac{1}{p}\;\;\mbox{is}\;\; \mbox{integer},
\]
where the summation runs over all primes  satisfying the condition $(p-1)\mid (n-1)$. 
\end{theorem}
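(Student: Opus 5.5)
We will prove the von Staudt--Clausen theorem in the classical way. Write $m=n-1$, which is even (for $n=1$ the claim reads $B_0+\sum_{p}1/p$ with no primes, and $B_0=1$ is an integer, so assume $m\ge 2$). The plan is to show that for every prime $p$ the number $pB_m$ is $p$-integral and satisfies $pB_m\equiv -1 \pmod p$ if $(p-1)\mid m$, and $pB_m\equiv 0\pmod p$ otherwise. Then $B_m+\sum_{(p-1)\mid m}1/p$ has no prime in its denominator, which is the claim.

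The starting point is the power-sum formula $S_m(N)=\sum_{k=0}^{N-1}k^m=\frac{1}{m+1}\sum_{j=0}^{m}\binom{m+1}{j}B_jN^{m+1-j}$. It follows directly from the generating function by comparing coefficients in $\sum_{k=0}^{N-1}e^{kt}=\frac{e^{Nt}-1}{e^t-1}$. Taking $N=p$ and isolating the term $j=m$ gives
\[
S_m(p)=pB_m+\sum_{j=0}^{m-1}\frac{1}{m+1}\binom{m+1}{j}B_j\,p^{m+1-j}.
\]
The elementary fact is that $S_m(p)\equiv -1\pmod p$ when $(p-1)\mid m$, by Fermat's little theorem. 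Otherwise $S_m(p)\equiv 0\pmod p$: choose a primitive root $g$ with $g^m\not\equiv1$, and note that multiplying the nonzero residues by $g$ permutes them, so $(g^m-1)S_m(p)\equiv0$.

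The main obstacle is showing that the correction sum is divisible by $p$ in $\mathbb{Z}_{(p)}$. We will argue by strong induction on $m$. The induction hypothesis, applied to all smaller even indices (and $B_1=-1/2$, with odd $B_j=0$ for $j>1$), gives $pB_j\in\mathbb{Z}_{(p)}$ for $j<m$. Each term then equals $(pB_j)\cdot\binom{m+1}{j}\frac{p^{m-j}}{m+1}$. Since $\frac{1}{m+1}\binom{m+1}{j}=\frac{1}{m+1-j}\binom{m}{j}$, it suffices to check that $p^{m-j}/(m+1-j)$ has positive $p$-adic valuation for $k=m-j\ge1$, that is, $v_p(k+1)<k$. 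This holds since $p^{k}\ge 2^k\ge k+1$, and $p^{k-1}\mid k+1$ would force $k+1\ge p^{k-1}$. The only boundary case needing care is $p=2$, $k=1$, where $2^1/2=1$ is not divisible by $2$. That term is $j=m-1$, which is odd and $\ge 3$ when $m\ge4$, so $B_j=0$; for $m=2$ it involves $B_1$, and we handle $m=2$ directly ($B_2=1/6$, $1/6+1/2+1/3=1$). With the correction sum $\equiv0\pmod p$, we get $pB_m\equiv S_m(p)\pmod p$, and combining this with the previous paragraph completes the proof.
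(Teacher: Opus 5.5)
The paper does not prove this statement: it quotes it as the classical von Staudt--Clausen theorem, with references to Clausen, von Staudt and Hardy--Wright, and uses it as a black box. Your argument is the standard self-contained proof. You use the power-sum identity $S_m(p)=pB_m+(\text{correction})$, evaluate $S_m(p)\bmod p$ via Fermat and a primitive root, and control the correction terms by strong induction. It is correct for $n\ge 3$. It also produces the local form $pB_m\equiv -1$ or $0 \pmod p$, which is the form the paper actually invokes later, for instance $pB_{n-1}\equiv -1\pmod p$ in the Giuga lemma. So the paper's route buys brevity, and yours buys self-containment at the cost of about a paragraph.

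There are two small slips and one optional simplification.

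\begin{itemize}
\item Your remark on $n=1$ is wrong. Every prime satisfies $(p-1)\mid 0$, so for $m=0$ the sum runs over all primes and diverges; the case is not vacuous. The statement as quoted is degenerate at $n=1$ and should be read for $n\ge 3$, which is all the paper needs. You should simply exclude $n=1$.

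\item Your justification of $v_p(k+1)<k$ is garbled. The clause about $p^{k-1}\mid k+1$ proves nothing relevant. The correct one-liner: if $p^k\mid k+1$, then $p^k\le k+1\le 2^k\le p^k$. This forces equality throughout, hence $p=2$ and $k=1$.

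\item The exceptional term $j=m-1$ for $p=2$ equals $(2B_{m-1})\binom{m}{m-1}\cdot\frac{2}{2}=(2B_{m-1})\,m$. Since $m$ is even, this lies in $2\mathbb{Z}_{(2)}$ even when $B_{m-1}\neq 0$. So your separate treatment of $m=2$ is harmless but unnecessary.
\end{itemize}
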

This theorem completely describes the structure of the denominator of any Bernoulli number. 

Using Bernoulli numbers, we now define the function
\[
\chi(n)=\frac{\mathrm{denom}\left(\displaystyle\frac{B_{n-1}}{n-1}\right)}{n\cdot\mathrm{denom}\left(B_{n-1}+\displaystyle\frac{1}{n}\right)},
\]
for any odd integer $n\geq 3$.  Also we shall consider  $\tilde{\chi}(n)=n\cdot \chi(n)$. 

Our main goal in this paper is to investigate   integrality properties of $\chi(n)$.  It should be noted that the proofs of the statements given in the next section are quite elementary.

In the sequel, we shall  need the definitions of two types of composite numbers, namely Carmichael numbers and Giuga numbers. 
\begin{definition}
A composite positive integer $n$ is called a Carmichael number, if 
\[
a^{n-1}\equiv 1\left(\mathrm{mod}\; n\right)
\]
is valid for all $(a, n) = 1$.
\end{definition}
Informally speaking, Carmichael numbers are pseudoprimes that satisfy Fermat's little theorem for any base. 

It is, of course, difficult to verify directly from this definition whether a given number $n$ is a Carmichael number, but Korselt's theorem \cite{Crandall}, \cite{Korselt} then comes to our aid.
\begin{theorem}  (Korselt's criterion) 
A composite number $n$ is a Carmichael number if and only if it satisfies two conditions: $n$ is squarefree, and for every prime divisor $p$ of $n$:  $\left(p-1\right)\mid  \left(n-1\right)$. 
\end{theorem}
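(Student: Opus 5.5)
We prove the two directions separately. Both rest only on Fermat's little theorem, the Chinese remainder theorem, and the existence of a primitive root modulo a prime $p$, i.e.\ the cyclicity of $(\mathbb{Z}/p\mathbb{Z})^{*}$.

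\emph{Sufficiency.} Assume $n$ is composite and squarefree, say $n=p_1\cdots p_k$ with distinct primes, and that $(p_i-1)\mid(n-1)$ for every $i$. Take $a$ with $(a,n)=1$. Then $p_i\nmid a$, so Fermat's little theorem gives $a^{p_i-1}\equiv 1 \pmod{p_i}$. Raising this to the power $(n-1)/(p_i-1)$ yields $a^{n-1}\equiv 1\pmod{p_i}$ for each $i$. The $p_i$ are pairwise coprime, and $n$ is their product because $n$ is squarefree. Hence the Chinese remainder theorem gives $a^{n-1}\equiv 1\pmod n$, so $n$ is a Carmichael number.

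\emph{Necessity, squarefreeness.} Let $n$ be a Carmichael number and suppose $p^2\mid n$ for some prime $p$. Put $a=1+n/p$. Every prime divisor of $n$ divides $n/p$, because $p\mid n/p$. So every prime divisor of $n$ divides $a-1$, none divides $a$, and $(a,n)=1$. Expand $a^{n-1}$ by the binomial theorem. Every term containing $(n/p)^2$ is divisible by $n$, since $p\mid n/p$. Therefore
\[
a^{n-1}\equiv 1+(n-1)\frac{n}{p}\equiv 1-\frac{n}{p}\pmod n .
\]
Since $0<n/p<n$, the right-hand side is not $\equiv 1 \pmod n$. This contradicts the Carmichael property, so $n$ is squarefree.

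\emph{Necessity, $(p-1)\mid(n-1)$.} Now $n$ is squarefree. Fix a prime $p\mid n$ and let $g$ be a primitive root modulo $p$. The moduli $p$ and $n/p$ are coprime, so by the Chinese remainder theorem we can choose $a$ with $a\equiv g\pmod p$ and $a\equiv 1\pmod{n/p}$. This $a$ is coprime to $n$. The Carmichael property gives $a^{n-1}\equiv 1 \pmod n$, hence $g^{n-1}\equiv 1\pmod p$. Since $g$ has order $p-1$ modulo $p$, it follows that $(p-1)\mid(n-1)$.

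The only non-elementary input, and the step needing the most care, is the existence of a primitive root modulo $p$. We would take it as the standard fact that a finite subgroup of the multiplicative group of a field is cyclic. The squarefreeness step is handled by the explicit choice $a=1+n/p$, so no primitive roots modulo $p^2$ are needed.
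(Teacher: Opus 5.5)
Your proof is correct and complete. Sufficiency follows from Fermat's little theorem and the Chinese remainder theorem. Squarefreeness comes from the base $a=1+n/p$. This base has multiplicative order exactly $p$ modulo $n$, since $(1+n/p)^k\equiv 1+k\,n/p \pmod n$, so $a^{n-1}\equiv 1$ would force $p\mid n-1$, which is impossible because $p\mid n$. The divisibility $(p-1)\mid(n-1)$ follows from a primitive root modulo $p$ lifted by the Chinese remainder theorem.

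The paper gives no proof to compare against. It quotes Korselt's criterion from the literature and explicitly takes it as its working definition of a Carmichael number. Your argument supplies the standard proof of what the paper takes for granted.

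Your explicit base $1+n/p$ suits the paper's definition, which only tests bases coprime to $n$. Under the alternative definition $a^n\equiv a \pmod n$ for all integers $a$, squarefreeness is immediate from the base $a=p$, but that base is not allowed here. Your choice handles this without needing primitive roots modulo $p^2$.
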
  
In other words, we shall actually take this as the definition of a Carmichael number. We shall also need the definition of a Giuga number.
\begin{definition}
A Giuga number is a composite integer $n$ such that each prime divisor $p$ of $n$ satisfies 
\begin{equation}
p\mid \left(\frac{n}{p}-1\right) \Leftrightarrow p^2\mid (n-p) \Leftrightarrow n\equiv p\; (\mathrm{mod}\; p^2).
\label{867434}
\end{equation}
\end{definition}
Even Giuga numbers are well known and appear in the OEIS under the identifier \textrm{A007850}. This sequence begins $30, 858, 1722, 66198,\ldots$ However, we are only interested in odd Giuga numbers. It can be proven that any odd Giuga number (if it exists) is a Carmichael number, i.e., it satisfies the conditions of Korselt's criterion.

If an odd Giuga number exists, it must be a product of at least 14 primes. The sharpest known lower bound to date,  implies that any odd Giuga number must have at least 19,908 decimal digits \cite{Borwein}.

The question of the existence of odd Giuga numbers is an open problem, closely related to the Agoh-Giuga conjecture. 
However, we do not wish to delve into this topic, as it is not the aim of this paper and refer the reader to authoritative sources, such as \cite{Borwein1}, \cite{Kellner1}.


In what follows, we agree that $n$ denotes an odd integer greater than three, while $p$ denotes a prime number. We use  $B_{n-1}=A/D$ to denote  corresponding Bernoulli number in reduced form. Note that $A$ here  inherits the sign of $B_{n-1}$.

\section{Integrality properties  of the function $\chi(n)$}

In this section, we present several results concerning the integrality property  of the function $\chi(n)$ for different odd integers.
\begin{lemma}
For any prime $p\geq 3$, the function $\chi(p)$ is integer-valued.
\end{lemma}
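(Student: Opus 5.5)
Write $n=p$ and $B_{p-1}=A/D$ in reduced form. The plan is to compute the three quantities in $\chi(p)$ explicitly in terms of $D$, using Theorem 1.1 (von Staudt--Clausen).

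By Theorem 1.1, $D=\prod_{(q-1)\mid(p-1)} q$ is squarefree. Since $(p-1)\mid(p-1)$, the prime $p$ itself divides $D$; write $D=pD'$ with $p\nmid D'$.

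\textbf{The denominator of $B_{p-1}+1/p$.} We have
\[
B_{p-1}+\frac1p=\frac{A+D'}{pD'}.
\]
By Theorem 1.1 the number $B_{p-1}+\sum_q 1/q$ is an integer, and $1/p$ is one of the summands. Hence $B_{p-1}+1/p$ equals an integer minus $\sum_{q\neq p}1/q$. Its denominator is therefore exactly $\prod_{q\neq p} q=D'$. We can also see this directly: $p\mid A+D'$, while any $q\mid D'$ does not divide $A$ (reducedness) but divides $D'$, so it does not divide $A+D'$. Thus $\mathrm{denom}(B_{p-1}+1/p)=D'=D/p$, and the denominator of $\chi(p)$ is $p\cdot D/p=D$.

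\textbf{The numerator $\mathrm{denom}(B_{p-1}/(p-1))$.} We have
\[
\frac{B_{p-1}}{p-1}=\frac{A}{D(p-1)}.
\]
Since $\gcd(A,D)=1$, the reduced denominator is $D(p-1)/g$, where $g=\gcd(A,p-1)$. So
\[
\chi(p)=\frac{p-1}{\gcd(A,p-1)},
\]
and it suffices to show that this quotient is an integer, which it obviously is. In fact there is a cleaner route: $\mathrm{denom}(A/(D(p-1)))$ is always divisible by $D$, because any prime $q\mid D$ does not divide $A$. Hence it is a multiple of $D$, and $\chi(p)$ is an integer.

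\textbf{Main obstacle.} The only step needing care is the exact determination $\mathrm{denom}(B_{p-1}+1/p)=D/p$. This relies on squarefreeness of $D$ and on $p\mid D$, both supplied by Theorem 1.1. Everything else is the observation that dividing a reduced fraction by an integer cannot remove any prime from its denominator. (The same computation shows that $\tilde{\chi}(p)=p(p-1)/\gcd(A,p-1)$.)
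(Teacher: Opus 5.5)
Your proof is correct and follows the paper's argument. You use von Staudt--Clausen to get $\mathrm{denom}(B_{p-1}+1/p)=D/p$, and $\gcd(A,D)=1$ to get $\mathrm{denom}(B_{p-1}/(p-1))=(p-1)D/\gcd(A,p-1)$, which gives $\chi(p)=(p-1)/\gcd(A,p-1)$; your check of the first denominator via squarefreeness of $D$ and $p\mid A+D/p$ just spells out what the paper reads off directly from the theorem.
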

\begin{proof}
By the von Staudt-Clausen theorem:
\[
\mathrm{denom}\left(B_{p-1}+\frac{1}{p}\right)=\frac{D}{p}.
\]
In addition, we take into consideration that
\[
\mathrm{denom}\left(\displaystyle\frac{B_{p-1}}{p-1}\right)=\frac{(p-1)D}{\gcd(p-1, \mathrm{num}(B_{p-1}) )}.
\]
Thus, 
\begin{equation}
\chi(p)=\frac{p-1}{\gcd(p-1, \mathrm{num}(B_{p-1}) )}.
\label{675498}
\end{equation}
The latter is clearly an integer.
\end{proof}
\begin{lemma}  \label{875643}
The value $\chi(p)$  is even for any prime $p$.
\end{lemma}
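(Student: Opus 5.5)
We take the formula (\ref{675498}) from the previous lemma as the starting point:
\[
\chi(p)=\frac{p-1}{\gcd(p-1,\mathrm{num}(B_{p-1}))}.
\]
Since $p\geq 3$, the number $p-1$ is even. It therefore suffices to show that $\mathrm{num}(B_{p-1})=A$ is odd. Then $\gcd(p-1,A)$ is odd, so it divides $p-1$ without absorbing the factor $2$. The full power of $2$ dividing $p-1$, in particular at least one factor $2$, survives in the quotient, and $\chi(p)$ is even.

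The key step is to show that the numerator of $B_{2k}$ is odd for every $k\geq 1$. We use the von Staudt--Clausen theorem as stated in the paper with $n=p$. The prime $2$ always satisfies $(2-1)\mid (p-1)$, so $2$ occurs among the primes in the sum. Hence
\[
B_{p-1}=N-\frac12-\sum_{p'\neq 2}\frac1{p'}
\]
for some integer $N$, where $p'$ runs over the odd primes with $(p'-1)\mid(p-1)$. The denominator $D$ of $B_{p-1}$ in reduced form is the product of these distinct primes. So $D$ is squarefree and even, with $D=2D'$ and $D'$ odd.

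The only point needing care is concluding that $A$ is odd. Multiplying the displayed identity by $D$ gives
\[
A=DN-D'-\sum_{p'\neq 2}\frac{D}{p'}.
\]
Here $DN$ is even, and each term $D/p'$ still contains the factor $2$, so it is even. The remaining term $D'$ is odd. Hence $A$ is odd. Equivalently, since $A/D$ is reduced and $2\mid D$, $A$ cannot be even; the computation above simply makes this explicit.

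Combining this with $2\mid(p-1)$ and (\ref{675498}) gives the claim. I expect no real obstacle here. One small point: the lemma says ``any prime $p$'', but $\chi$ is only defined for odd $n\geq 3$, so we restrict to $p\geq 3$, which is exactly where $p-1$ is even.
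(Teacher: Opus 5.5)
Your proof is correct and matches the paper's argument. Von Staudt--Clausen forces $2\mid D$, so the numerator of $B_{p-1}$ is odd. Hence $\gcd(p-1,\mathrm{num}(B_{p-1}))$ is odd, and the even number $p-1$ keeps its factor $2$ in the formula for $\chi(p)$ from the previous lemma. Your computation $A=DN-D'-\sum D/p'$ only makes explicit what the paper gets directly from $A/D$ being in lowest terms.
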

\begin{proof}
By the von Staudt-Clausen theorem, the denominator of $B_{p-1}$ is always divisible by 2. Hence, the numerator of $B_{p-1}$ is odd, while $p-1$ is evidently even. Therefore, $\chi(p)$ defined by     (\ref{675498}) is even for any prime $p$.
\end{proof}
\begin{lemma}
Let $n$ be a composite number that is not a Carmichael number. Then $\chi(n)$ is not an integer.
\end{lemma}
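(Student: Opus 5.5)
The plan is to compute $\chi(n)$ explicitly for an arbitrary odd $n\geq 3$ in terms of $A$ and $D$, as in the proof of the lemma for primes. Then I will read off a necessary condition for integrality and show that it is exactly the pair of Korselt conditions. Since a non-Carmichael composite $n$ violates Korselt's criterion, $\chi(n)$ cannot be an integer.

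First, as before,
\[
\mathrm{denom}\left(\frac{B_{n-1}}{n-1}\right)=\frac{(n-1)D}{h},\qquad h=\gcd(n-1,A).
\]
Next, I write
\[
B_{n-1}+\frac{1}{n}=\frac{nA+D}{nD}.
\]
Putting $g=\gcd(nA+D,\,nD)$, we get $\mathrm{denom}(B_{n-1}+1/n)=nD/g$. Substituting into the definition gives
\[
\chi(n)=\frac{(n-1)D/h}{n\cdot nD/g}=\frac{n-1}{h}\cdot\frac{g}{n^{2}}.
\]
Since $h\mid n-1$, the factor $(n-1)/h$ is an integer. It is coprime to $n$ because $\gcd(n,n-1)=1$. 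Hence $\chi(n)\in\mathbb{Z}$ forces $n^{2}\mid g$, and therefore $n^{2}\mid nD$, that is, $n\mid D$.

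Now I use the von Staudt--Clausen theorem. It says that $D$ is the product of the distinct primes $p$ with $(p-1)\mid(n-1)$, so in particular $D$ is squarefree. From $n\mid D$ it then follows that $n$ is squarefree. It also follows that every prime $p\mid n$ divides $D$, which means $(p-1)\mid(n-1)$. These are precisely Korselt's two conditions. So if $n$ is composite and $\chi(n)$ is an integer, then $n$ is a Carmichael number; the contrapositive is the statement.

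The only delicate point is the bookkeeping of the gcds. Specifically, we must make sure that the factor $h$ cannot absorb any part of $n$, and this holds because $h\mid n-1$. Everything else is a direct consequence of the squarefreeness of $D$.
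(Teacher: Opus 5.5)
Your proof is correct, and it takes a different route from the paper's.

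\textbf{The paper's route.} The paper argues locally. It picks a prime $q\mid n$ with $(q-1)\nmid(n-1)$. Such a $q$ divides neither $n-1$ nor $D$, so $q\nmid\mathrm{denom}(B_{n-1}/(n-1))$. On the other hand, $q$ does divide $\mathrm{denom}(B_{n-1}+1/n)$. Hence already $\tilde{\chi}(n)$ keeps $q$ in its denominator. This is quick and exhibits the obstructing prime explicitly.

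\textbf{What it misses.} That argument only handles the failure of the second Korselt condition. It says nothing about a non-squarefree $n$ all of whose prime factors satisfy $(p-1)\mid(n-1)$. Examples are $n=9$, $n=25$ and $n=45$. For $n=9$ one has $B_8=-1/30$ and $\chi(9)=8/27$, so the lemma holds there, but the paper's proof never reaches that case.

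\textbf{What your route buys.} Your global formula
\[
\chi(n)=\frac{n-1}{h}\cdot\frac{g}{n^{2}}
\]
is valid for every odd $n$ and yields the necessary condition $n\mid D$. The squarefreeness of $D$, from von Staudt--Clausen, then gives both Korselt conditions at once. So your argument closes exactly the case the paper's proof leaves open.

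This is essentially the computation the paper performs only later, in the Carmichael-number lemma, and there only for $\tilde{\chi}$. Pushed one step further, your formula shows that $\chi(n)\in\mathbb{Z}$ if and only if $n^2\mid g$. That is equivalent to $n\mid D$ together with $n\mid A+D/n$. Combined with the paper's identification of the second condition with the Giuga condition, this gives the main theorem for composite $n$ directly.
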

\begin{proof}
For any odd composite integer $n$, let $q$ be a prime divisor of $n$. Suppose that $q-1\nmid  n-1$.  Then $q\nmid  \mathrm{denom}\left(B_{n-1}/(n-1)\right)$, but $q\mid \mathrm{denom}\left(B_{n-1}+1/n\right)$. This means that in this case $\tilde{\chi}(n)=n\cdot \chi(n)$ is not an integer, and a fortiori neither is $\chi(n)$.   Thus, if we have found such a prime $q$, that is sufficient to prove that $\chi(n)$  is not an integer. 
\end{proof}
\begin{lemma}
For  any Carmichael number, the value of the function $\tilde{\chi}(n)$ is integer.  
\end{lemma}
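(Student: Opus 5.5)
We show that $\tilde{\chi}(n)=\mathrm{denom}\bigl(B_{n-1}/(n-1)\bigr)/\mathrm{denom}\bigl(B_{n-1}+1/n\bigr)$ is an integer. The plan is to compute both denominators explicitly from the von Staudt-Clausen theorem and Korselt's criterion, as in the proof of the first lemma for primes.

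\emph{Step 1: the denominator of $B_{n-1}+1/n$.} Write $B_{n-1}=A/D$ in reduced form. By the von Staudt-Clausen theorem, $D$ is the product of all primes $p$ with $(p-1)\mid(n-1)$, and in particular $D$ is squarefree. Let $n$ be a Carmichael number. By Korselt's criterion, $n$ is squarefree and every prime divisor $p$ of $n$ satisfies $(p-1)\mid(n-1)$, so $n\mid D$. Hence
\[
B_{n-1}+\frac{1}{n}=\frac{A+D/n}{D}.
\]
Let $p$ be a prime dividing $D$. If $p\mid n$, then $p\nmid D/n$ because $D$ is squarefree, and $p\nmid A$ because $A/D$ is reduced. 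Since $p$ divides $A$ exactly when it divides $D$ (namely, never), the question is only whether $p$ divides $A+D/n$.

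This is the place that needs care, and the intended route is again von Staudt-Clausen. We have $B_{n-1}+\sum_{p\mid D}1/p\in\mathbb{Z}$ and $1/n=\sum_{p\mid n}c_p/p$ modulo integers, where the $c_p$ come from partial fractions. It is simpler to argue locally at each prime $p\mid D$. The $p$-adic part of $B_{n-1}$ is $-1/p$ modulo $\mathbb{Z}_{(p)}$. The $p$-adic part of $1/n$ is $1/p\cdot (n/p)^{-1}$ modulo $\mathbb{Z}_{(p)}$. Whether these cancel depends on the congruence $n/p\equiv 1 \pmod p$, which is the Giuga condition. We do not need to decide cancellation. It is enough that the denominator of $B_{n-1}+1/n$ divides $D$, because $D$ is a common denominator of both terms. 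So
\[
\mathrm{denom}\bigl(B_{n-1}+1/n\bigr)\ \Big|\ D .
\]

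\emph{Step 2: the denominator of $B_{n-1}/(n-1)$.} We have
\[
\frac{B_{n-1}}{n-1}=\frac{A}{(n-1)D}.
\]
Since $\gcd(A,D)=1$, its reduced denominator is $(n-1)D/\gcd(n-1,A)$, exactly as in the prime case. This number is a multiple of $D$: the factor $\gcd(n-1,A)$ is coprime to $D$ and divides $n-1$, so it cancels only against $n-1$.

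\emph{Step 3: conclusion.} Combining the two steps,
\[
\tilde{\chi}(n)=\frac{(n-1)D/\gcd(n-1,A)}{\mathrm{denom}\bigl(B_{n-1}+1/n\bigr)}
=\frac{n-1}{\gcd(n-1,A)}\cdot\frac{D}{\mathrm{denom}\bigl(B_{n-1}+1/n\bigr)},
\]
and each factor is an integer. The only point needing care is the divisibility in Step 1, which rests on $n\mid D$. This in turn requires both parts of Korselt's criterion, squarefreeness and $(p-1)\mid(n-1)$, so that $1/n$ introduces no new primes or higher prime powers into the common denominator $D$.
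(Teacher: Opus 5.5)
Your proof is correct and follows essentially the paper's route. Both arguments compute $\mathrm{denom}\bigl(B_{n-1}/(n-1)\bigr)=(n-1)D/\gcd(A,n-1)$ and rest on $n\mid D$ (von Staudt--Clausen plus Korselt); the only difference is that the paper computes $\mathrm{denom}(B_{n-1}+1/n)=Dn/\gcd(An+D,Dn)$ exactly and obtains the equivalence $\tilde{\chi}(n)\in\mathbb{Z}\iff n\mid D$ for every odd $n$, whereas you need only the one-directional bound $\mathrm{denom}(B_{n-1}+1/n)\mid D$.

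Your second factor $D/\mathrm{denom}(B_{n-1}+1/n)=\gcd(A+D/n,\,D)$ equals the paper's $\gcd(A+m,n)$ with $m=D/n$. The $p$-adic digression in Step 1 is not needed and can be deleted, including the garbled remark that $p$ divides $A$ ``exactly when it divides $D$''.
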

\begin{proof}
To begin with, let us see how the expression for the function $\tilde{\chi}(n)$ can be written most conveniently, provided that its argument is a Carmichael number. 

Since $\gcd(A, D)=1$, then 
\[
\mathrm{denom}\left(\displaystyle\frac{B_{n-1}}{n-1}\right)=\mathrm{denom}\left(\displaystyle \frac{A}{(n-1) D}\right)=\frac{ \left(n-1\right)D}{\gcd(A,  n-1)}.
\]
On the other hand,
\[
\mathrm{denom}\left(\displaystyle B_{n-1}+\frac{1}{n}\right)=\mathrm{denom}\left(\displaystyle \frac{An+D}{D\cdot n}\right)=\frac{D\cdot n}{\gcd(An+D,  D\cdot n)}.
\]
Hence:
\[
\tilde{\chi}(n)=\frac{(n-1)\gcd(An+D, D\cdot n)}{n\gcd(A, n-1)}.
\]
It is evident that $\gcd(A, n-1)$ and $n$ are coprime. Therefore, the integrality condition for $\tilde{\chi}(n)$ is equivalent to the fact that $n \mid \gcd\left(An+D, D\cdot n\right)$. But $\gcd\left(An+D, D\cdot n\right)$ is divisible by $n$ if and only if $n\mid \left(An+D\right)$. And this, in turn, is equivalent to  $n\mid D$. 
In summary, we can say that $\tilde{\chi}(n)\in\mathbb{Z}$ if and only if $n\mid D$. 
\end{proof}

It should be noted how the expression for the function $\tilde{\chi}(n)$  can be simplified, if its argument is any Carmichael number.   We can write $D=\mathrm{denom}\left(B_{n-1}\right)=n\cdot m$, so that $\gcd\left(n, m\right)=1$. We get in succession:
\[
B_{n-1}+\frac{1}{n}=\frac{A+m}{nm}\;\; \Leftrightarrow \;\;  \mathrm{denom}\left(B_{n-1}+\frac{1}{n}\right)=\frac{n\cdot m}{\gcd(A+m, n)}\;\; \Leftrightarrow 
\]
\begin{equation}
\tilde{\chi}(n)=\frac{\left(n-1\right)\gcd(A+m, n)}{\gcd(A, n-1)}.
\label{6753867}
\end{equation}
\begin{lemma}
If for some Carmichael number $n$ we have  $n \mid \tilde{\chi}(n)$ then $n$ is a Giuga number.  
\end{lemma}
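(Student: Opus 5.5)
Start from formula (\ref{6753867}). For a Carmichael number $n$ the denominator $D$ of $B_{n-1}$ factors as $D=n\cdot m$ with $\gcd(n,m)=1$, and
\[
\tilde{\chi}(n)=\frac{(n-1)\gcd(A+m,n)}{\gcd(A,n-1)}.
\]
Since $\gcd(n,n-1)=1$ and $\gcd(A,n-1)$ is coprime to $n$, the hypothesis $n\mid\tilde{\chi}(n)$ forces $n\mid \gcd(A+m,n)$. Hence $\gcd(A+m,n)=n$, i.e. $n\mid A+m$. The plan is to translate this congruence, prime by prime, into the Giuga condition (\ref{867434}).

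\textbf{Step 1: the Bernoulli number modulo a prime divisor.} Fix a prime $p\mid n$. Then $p\,\|\, D$, and $n$ is squarefree by Korselt's criterion. Write $n=p\,r$ with $\gcd(p,r)=1$. By the von Staudt--Clausen theorem, $B_{n-1}+\sum_{q}1/q$ is an integer, the sum running over primes $q$ with $(q-1)\mid(n-1)$. Multiply by $D$ and reduce modulo $p$. Every term $D/q$ with $q\neq p$ is divisible by $p$, so
\[
A\equiv -D/p = -r m \pmod p .
\]

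\textbf{Step 2: combine with $n\mid A+m$.} Reducing modulo $p$ gives $A\equiv -m\pmod p$. Comparing with Step 1,
\[
rm\equiv m\pmod p .
\]
Since $p\nmid m$ (because $\gcd(n,m)=1$), we may cancel $m$ and obtain $r\equiv 1 \pmod p$, that is, $p\mid (n/p-1)$. This is exactly condition (\ref{867434}).

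\textbf{Step 3: conclusion.} Step 2 holds for every prime divisor $p$ of $n$, and a Carmichael number is composite by definition. Hence $n$ is a Giuga number.

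The only delicate point is Step 1: one must check carefully that the von Staudt--Clausen sum, after multiplication by $D$, leaves only the single term $D/p$ modulo $p$. This uses that each prime appears to the first power in $D$, and that every prime divisor of $n$ really does appear in $D$, which is where Korselt's condition $(p-1)\mid(n-1)$ enters.
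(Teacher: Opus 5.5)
Your proof is correct and follows the paper's argument essentially step by step. You reduce $n\mid\tilde{\chi}(n)$ to $n\mid(A+m)$ via (\ref{6753867}), apply von Staudt--Clausen modulo each prime $p\mid n$ to get $A\equiv -(n/p)\,m \pmod p$, and cancel $m$ to reach $n/p\equiv 1 \pmod p$; the only difference is that the paper proves the full equivalence $n\mid(A+m)\Leftrightarrow$ (Giuga condition), whereas you prove just the direction the lemma needs.
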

\begin{proof}
To begin with, we show that the condition $n\mid \left(A+m\right)$ is equivalent to the Giuga number condition. More exactly: 
\begin{equation}
n\mid \left(A+m\right) \Leftrightarrow n \equiv p\; \left(\mathrm{mod}\; p^2\right),\; \forall p\mid n.
\label{7647889}
\end{equation}

By the von Staudt-Clausen theorem, for a Carmichael number, the congruence 
\[
p\cdot B_{n-1}\equiv -1\; (\mathrm{mod}\; p),\;\; \forall p\mid n
\] 
holds.   The following logical chain of congruences is valid:
\[
\frac{p A}{nm}\equiv -1\; (\mathrm{mod}\; p)\; \Leftrightarrow A \equiv -\frac{n}{p}m\; (\mathrm{mod}\; p) \Leftrightarrow 
A+m \equiv m\left(1-\frac{n}{p}\right)\; (\mathrm{mod}\; p).
\]
Consequently, it further follows that 
\[
p\mid \left(A+m\right) \Leftrightarrow \frac{n}{p} \equiv 1 \; (\mathrm{mod}\; p) \Leftrightarrow n \equiv p \; (\mathrm{mod}\; p^2).
\]
Since these arguments hold for any divisor of $n$, and since $n$ is squarefree, it follows that it implies (\ref{7647889}). Thus, we have shown the equivalence of condition $n\mid \left(A+m\right)$ and the condition for a Giuga number (\ref{867434}). 

It follows, from (\ref{6753867}) that 
\[
n\mid \tilde{\chi}(n) \Leftrightarrow n\mid \gcd (A+m, n) \Leftrightarrow n\mid \left(A+m\right) \Leftrightarrow n \equiv p\; \left(\mathrm{mod}\; p^2\right),\; \forall p\mid n.
\]
Thus, the lemma is proved.
\end{proof}

Combining the lemmas stated above, we obtain the following result.
\begin{theorem}
The function $\chi(n)$ is integer-valued by $n$ if and only if $n$ is either a prime number or an odd Giuga number.
\end{theorem}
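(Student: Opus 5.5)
The plan is to combine the preceding lemmas by splitting odd $n\geq 3$ into three exhaustive classes: primes, composite non-Carmichael numbers, and Carmichael numbers.

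\emph{Primes.} If $n=p$ is prime, the first lemma of this section shows that $\chi(p)$ is an integer. So every prime is covered.

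\emph{Composite non-Carmichael numbers.} If $n$ is composite but not a Carmichael number, the third lemma shows that $\chi(n)\notin\mathbb{Z}$. One point needs checking here. That proof only treats the case where some prime divisor $q$ of $n$ has $(q-1)\nmid(n-1)$. By Korselt's criterion, a composite non-Carmichael $n$ could instead fail squarefreeness while still having $(p-1)\mid(n-1)$ for every $p\mid n$. In that case I would argue as in the fourth lemma:
\[
\tilde{\chi}(n)\in\mathbb{Z}\iff n\mid D .
\]
By the von Staudt--Clausen theorem $D$ is squarefree, so $n\nmid D$ whenever $p^2\mid n$. Hence even $\tilde{\chi}(n)$ fails to be an integer, and so does $\chi(n)$. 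This closes that gap.

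\emph{Carmichael numbers.} Let $n$ be a Carmichael number, so $n$ is squarefree, $n\mid D$, and we may write $D=nm$ with $\gcd(n,m)=1$. The key observation is that $\chi(n)=\tilde{\chi}(n)/n$. Therefore $\chi(n)\in\mathbb{Z}$ if and only if $n\mid\tilde{\chi}(n)$, where $\tilde{\chi}(n)$ is given by (\ref{6753867}). The proof of the last lemma establishes the chain
\[
n\mid\tilde{\chi}(n)\iff n\mid(A+m)\iff n\equiv p \pmod{p^2}\ \ \forall p\mid n,
\]
that is, if and only if $n$ is an (odd) Giuga number.

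The step I expect to be the main obstacle is the first equivalence in this chain. Formula (\ref{6753867}) reads
\[
\tilde{\chi}(n)=\frac{(n-1)\gcd(A+m,n)}{\gcd(A,n-1)},
\]
and the factor $(n-1)/\gcd(A,n-1)$ is coprime to $n$. So $n$ divides $\tilde{\chi}(n)$ exactly when $n\mid\gcd(A+m,n)$, i.e. $\gcd(A+m,n)=n$. Both directions of the Giuga equivalence are then supplied by the prime-by-prime congruence argument in the last lemma, using that $n$ is squarefree.

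\emph{Converse.} Every odd Giuga number is a Carmichael number, as noted in the introduction. So it falls into the third class, and there the chain gives $\chi(n)\in\mathbb{Z}$. Since the three classes exhaust all odd $n\geq 3$, the theorem follows.
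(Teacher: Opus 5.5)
Your route is the paper's own: you split odd $n\geq 3$ into primes, composite non-Carmichael numbers and Carmichael numbers, and read the answer off the lemmas. In the Carmichael case you use the chain $n\mid\tilde{\chi}(n)\iff n\mid(A+m)\iff n\equiv p \pmod{p^2}$ for all $p\mid n$.

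Your extra step for non-squarefree $n$ is correct and actually needed. The paper's third lemma only handles $n$ with a prime factor $q$ such that $(q-1)\nmid(n-1)$, and there is none for $n=9$ or $n=25$. The computation $\tilde{\chi}(n)\in\mathbb{Z}\iff n\mid D$ in the fourth lemma never uses the Carmichael hypothesis, and $D$ is squarefree, so your patch works. In fact that computation disposes of all composite non-Carmichael $n$ at once, because $n\mid D$ holds exactly when $n$ is squarefree with $(p-1)\mid(n-1)$ for every $p\mid n$.

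The step that is not justified is the converse, ``every odd Giuga number is a Carmichael number''. The introduction merely asserts it, and nothing in the paper implies it. The Giuga condition says $n/p\equiv 1\pmod p$. Korselt's condition is equivalent to $n/p\equiv 1\pmod{p-1}$, since $n-1=(p-1)\frac{n}{p}+\bigl(\frac{n}{p}-1\bigr)$. These are congruences to coprime moduli with no evident link. Nor is the implication a known theorem: the numbers that are simultaneously Giuga and Carmichael are precisely the hypothetical counterexamples to Giuga's primality conjecture, and they are not known to exhaust the odd Giuga numbers.

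So what your argument proves is that $\chi(n)\in\mathbb{Z}$ if and only if $n$ is prime or $n$ is both a Carmichael number and a Giuga number. That gives the ``only if'' half of the theorem as stated. The ``if'' half for an odd Giuga number needs a separate proof that $(p-1)\mid(n-1)$ for each $p\mid n$; squarefreeness does follow from the Giuga condition. Failing such a proof, the statement should be restricted to Carmichael--Giuga numbers. This gap is inherited from the paper, not introduced by you.
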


\section{Empirical observations and remarks}

It should be noted that the constancy condition $\chi(n)=\lambda$ yields a partition of the set of prime numbers into disjoint classes. 
Let us denote, say, by $P_{\lambda}$ the class of prime numbers for which the function $\chi(p)$ takes  the value $\lambda$. According to Lemma \ref{875643}, this number is even.
Let us consider the following case.  The sequence of even numbers beginning as $(2, 4, 6, 12, 16, 18, 36, 42,\ldots)$ is characterized by the fact that the first prime number in the sequence $P_{\lambda}$ begins with $\lambda+1$.  We suppose that this set is infinite.
In the following table, we present the first ten prime numbers belonging to the class $P_{\lambda}$.
\begin{center}
\begin{tabular}{|c|c|c|c|c|c|c|c|c|c|c|}
\hline
$P_2$ &3 & 11 & 23 & 47 & 59 & 71 & 83 & 107 & 131 & 167 \\
\hline
$P_4$ & 5 & 29 & 53 & 149 & 173 & 197   & 269 & 293 & 317  & 389 \\
\hline
$P_6$ & 7 & 31 & 67 & 79 & 103  & 139  & 151 & 223  & 283 & 367 \\
\hline
$P_{12}$ & 13 & 229  & 277 & 349  & 373 & 709 & 733 & 853 & 877 & 997 \\
\hline
$P_{16}$ & 17 & 113 & 593 & 977 & 1553 & 2129 & 2417 & 2609 & 2897 & 3089  \\
\hline
$P_{18}$   & 19 & 199  & 307 & 523 & 739 & 1063 & 1171 & 1279 & 1423 & 1531 \\
\hline
$P_{36}$ &  37   & 397 & 613 & 829 & 1117 & 1549 & 1693 & 2557 & 3637 & 3709 \\
\hline
$P_{40}$  &  41 & 281 & 521 & 761 & 1481  &  1721 & 2441  & 3881 & 5081 & 6521 \\
\hline
$P_{42}$  &  43 & 211 & 463  & 547 & 967  &  1051   & 1303  & 1723 & 2731 & 3067\\
\hline
 $P_{60}$ & 61  & 1021 &  1381 & 1741 &  2221 &  3181 &  3541 & 4021 & 4261  & 5821\\
\hline
$P_{72}$ & 73 &  1657 & 2089  & 2953 &  5113 &  5689 & 7417  & 8713 & 9433  & 10009 \\
\hline
$P_{96}$ & 97 & 4129  & 4513  & 5857 &  9697 & 10273  & 11617  & 12577 & 15073  & 15649 \\
\hline
$P_{100}$ & 101 & 701  & 1301  & 1901 & 3701  & 6101  & 6701 & 7901 & 10301  & 13901 \\
\hline
\end{tabular}
\end{center}
One sees, that there exist 13 such classes $P_{\lambda}$, where even number  $\lambda$ ranges from 2 to 100.

It looks like the condition $\chi(p)=p-1$ selects an integer sequence (excluding the first term, 2) known in the OEIS as \textrm{A337119}. What definition does the OEIS provide for this sequence? The definition states that these are all primes $p$ such that 
\[
b^{p-1} \equiv 1\; (\mathrm{mod}\; p-1)
\] 
for every $b$ coprime to $p-1$. 


Let us now consider $P_2$. It seems to form the sequence \textrm{A141123} (excluding the initial term, 2). If so, then these are precisely the primes representable by the quadratic form 
$F(x, y)=-x^2+2xy+2y^2$ of discriminant $D=12$.  
\begin{conjecture}
For any prime $p$: 
\[
\frac{p-1}{2}\;\; \mbox{divide}\;\; \mathrm{num}(B_{p-1})\; \Leftrightarrow\;  p\;\; \mbox{is}\;\; \mbox{representable}\;\; \mbox{by}\;\; -x^2+2xy+2y^2.
\] 
\end{conjecture}
We encourage the reader to prove or disprove this conjecture.

\section*{Acknowledgments}

This work was carried out within the framework of  the state assignment of the Ministry of Education
and Science of the Russian Federation on the project No.  126021217175-3.

\end{document}